\newcommand{\ls}{\leq}
\newcommand{\gr}{\geq}
\newcommand{\dd}{\mathrm{d}}
\newcommand{\E}{\mathbb{E}}
\newcommand{\pp}{\mathbb{P}}
\newcommand{\1}{\textbf{1}}
\newcommand{\R}{\mathbb{R}}
\newcommand{\red}{}
\DeclareMathOperator{\Var}{Var}
\def\thm@space@setup{%
  \thm@preskip=12pt plus 0pt minus 0pt
  \thm@postskip=0pt plus 0pt minus 0pt
}
\xpatchcmd{\proof}{6\p@\@plus6\p@\relax}{\z@skip}{}{}
\newtheorem{theorem}{Theorem}
\newtheorem{lemma}[theorem]{Lemma}
\newtheorem{corollary}[theorem]{Corollary}
\theoremstyle{remark}
\newtheorem{remark}[theorem]{Remark}
\theoremstyle{definition}
\title{Entropies of sums of independent gamma random variables}
\author{Giorgos Chasapis*}\thanks{*Corresponding author. Email: \texttt{gchasapi@andrew.cmu.edu}}
\author{Salil Singh}
\author{Tomasz Tkocz}
\address{Carnegie Mellon University; Pittsburgh, PA 15213, USA.}
\email{\{gchasapi,salils,ttkocz\}@andrew.cmu.edu}
\thanks{TT's research supported in part by NSF grant DMS-1955175.}
\date{\today}
\begin{document}

\begin{abstract} 
We establish several Schur-convexity type results under fixed variance for weighted sums of independent gamma random variables and obtain nonasymptotic bounds on their R\'enyi entropies. In particular, this pertains to the recent results by Bartczak-Nayar-Zwara as well as Bobkov-Naumov-Ulyanov, offering simple proofs of the former and extending the latter.
\end{abstract}

\maketitle

\bigskip

\begin{footnotesize}
\noindent {\em 2010 Mathematics Subject Classification.} Primary 60E15; Secondary 94A17.

\noindent {\em Key words. Entropy, max-entropy, gamma distribution, weighted sums, Schur-convexity} 
\end{footnotesize}

\bigskip

\section{Introduction}

Suppose $X_1, X_2, \dots, X_n$ are independent, identically distributed (i.i.d.) square-integrable random variables, say with variance $1$ and $a = (a_1, a_2, \dots, a_n)$ is a unit vector in $\R^n$, $\sum_{k=1}^n a_k^2 = 1$, so that the variance of the sum $X_a = \sum_{k=1}^n a_kX_k$ does not depend on $a$ and also equals $1$. For which vectors $a$, is the distribution of $X_a$ as close to the Gaussian distribution as possible? A natural way to quantify this vague question is to measure the distance to Gaussianity via relative entropy and ask about $\inf_a D(X_a||G)$. Here $D(X||G) = h(G) - h(X)$ is the relative entropy of $X$ with respect to a Gaussian random variable $G$ of the same variance as $X$, where 
\[
h(X) = -\int_{\R} f\log f
\]
is the Shannon entropy of a random variable $X$ with density $f$. 

This question was raised in \cite{ENT1} and addressed for (symmetric) \emph{Gaussian mixtures}, where the extremising sequence turns out to be simply $a = (\frac{1}{\sqrt{n}}, \dots, \frac{1}{\sqrt{n}})$. In a recent paper \cite{BNZ}, Bartczak, Nayar and Zwara considered the case of gamma distribution and established that the same vector is extremal among all nonnegative vectors, that is whose all components are nonnegative. We refer to their paper for a comprehensive account of relevant reference and related problems. Their approach rests on the so-called method of interlacing densities (see also \cite{ENT2}). For the gamma distribution, this entails a rather technical and involved analysis for Bessel functions.

Our first goal in this paper is to offer an alternative approach. It turns out that for the gamma distribution, simple arguments involving moment generating functions allow to establish certain Schur-convexity type results. Those in particular give the main result of \cite{BNZ}, as well as partially address Question 6 from \cite{BNZ} about moments.

Our second goal in this paper is to extend a recent result of \cite{BNU}, where Bobkov, Naumov and Ulyanov find a nonasymptotic expression for the maximum of the density of $X_a = \sum a_k X_k$ with $X_k$ having $\Gamma(1/2)$ distribution, equivalently for the $\infty$-R\'enyi entropy of $X_a$ in terms of $a$. We extend this to $\Gamma(\gamma)$ distribution with $\gamma \geq 1/2$. Such bounds have applications to L\'evy's concentration function, thus to anti-concentration inequalities (see, e.g. \cite{BCh} as well as e.g. the survey \cite{HV} for an exposition on anti-concentration).

Another piece of motivation to study such extensions is the fact that weighted sums of independent $\Gamma(1/2)$ random variables emerge naturally from Gaussian quadratic forms, which was a starting point for both \cite{BNZ} and \cite{BNU}.

In the next section, we recall the definition of R\'enyi entropy and formulate our results. The remaining part of this note will be devoted to their proofs.

\subsection*{Acknowledgements.} We are indebted to Han Nguyen for many fruitful and illuminating discussions. We should like to thank an anonymous referee very much for their insightful comments greatly improving this manuscript.

\section{Results}

Let $0 \leq \alpha \leq \infty$. For a random variable $X$ with density $f$, we define its R\'enyi entropy of order $\alpha$ as
\[
h_\alpha(X) = \frac{1}{1-\alpha}\log \int f^\alpha,
\]
see \cite{R}, understood as limits in the cases $\alpha \in \{0,1,\infty\}$, namely $h_0(X) = \log |\text{supp}(f)|$, $h_1(X) = h(X)$ (the Shannon entropy), $h_\infty(X) = -\log \|f\|_\infty$. For notational convenience, we also introduce the functional
\[
M(X) = \|f\|_\infty.
\]

Throughout, we let $\gamma > 0$ and let $X_1, X_2, \dots$ be i.i.d. random variables with $\Gamma(\gamma)$ distribution, that is with density $\Gamma(\gamma)^{-1}x^{\gamma-1}e^{-x}\1_{(0,\infty)}(x)$ on $\R$.

\subsection{Schur-convexity type results and entropies}

Our first main result gives the Schur-concavity of \emph{centred} weighted sums averaged against  arbitrary completely monotone functions. For concise exposition on majorisation and Schur-convexity, we refer for instance to Chapter II of \cite{Bh}. We recall that a function $\Phi\colon (0,+\infty) \to (0,+\infty)$ is completely monotone if it is a \emph{mixture} of exponential functions, that is $\Phi(x) = \int_{0}^\infty e^{-tx}\dd\mu(t)$ for some nonnegative Borel measure $\mu$, equivalently (by Bernstein's theorem) $(-1)^{m}\Phi^{(m)}(x) \geq 0$ for every $m = 0, 1, 2, \dots$, see, e.g. \cite{Fel}.

\begin{theorem}\label{thm:Phi}
For a completely monotone function $\Phi\colon (0,+\infty) \to (0,+\infty)$ and $c > 0$, the function
\begin{equation}\label{eq:Phi}
(a_1, \dots, a_n) \mapsto \E\Phi\left(c + \sum_{j=1}^n \sqrt{a_j}(X_j-\gamma)\right)
\end{equation}
is Schur-concave on the simplex $\{a \in \R_+^n, \ \sum a_j < \frac{c^2}{\gamma^2n}\}$.
\end{theorem}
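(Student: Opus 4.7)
The plan is to exploit the Bernstein representation $\Phi(x) = \int_0^\infty e^{-tx}\dd\mu(t)$ with $\mu \geq 0$ to reduce the expectation in \eqref{eq:Phi} to a $t$-mixture of exponentials, for which the $\Gamma(\gamma)$ moment generating function is explicit. First I note that the simplex constraint is precisely what ensures the argument of $\Phi$ stays positive: Cauchy--Schwarz gives $\sum \sqrt{a_j} \leq \sqrt{n\sum a_j} < c/\gamma$, so $c+\sum_j \sqrt{a_j}(X_j-\gamma) \geq c - \gamma\sum\sqrt{a_j} > 0$ almost surely. Consequently $\E\Phi(\cdot)$ is finite, and Tonelli (applied to the nonnegative integrand arising from Bernstein) yields
\[
\E\Phi\Bigl(c + \sum_{j=1}^n \sqrt{a_j}(X_j-\gamma)\Bigr) = \int_0^\infty e^{-tc}\prod_{j=1}^n \bigl[e^{t\gamma\sqrt{a_j}}\,\E e^{-t\sqrt{a_j}X_j}\bigr]\dd\mu(t).
\]

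Next I would invoke the standard $\Gamma(\gamma)$ moment generating function $\E e^{sX_j} = (1-s)^{-\gamma}$ (valid for $s<1$) to obtain $\E e^{-t\sqrt{a_j}X_j} = (1+t\sqrt{a_j})^{-\gamma}$ for every $t>0$. Since Schur-concavity is preserved both under multiplication by the constant $e^{-tc}$ and under integration against the nonnegative measure $\dd\mu(t)$, it suffices to establish Schur-concavity in $a$ of each $t$-integrand $F_t(a) := \prod_j e^{t\gamma\sqrt{a_j}}(1+t\sqrt{a_j})^{-\gamma}$. Because $\exp$ is increasing and Schur-concavity is preserved under composition with increasing functions, this in turn reduces to Schur-concavity of
\[
\log F_t(a) = \sum_{j=1}^n h_t(a_j), \qquad h_t(a) := t\gamma\sqrt{a}-\gamma\log(1+t\sqrt{a}).
\]
By the standard criterion for symmetric separable sums, this is equivalent to the one-variable function $h_t$ being concave on $[0,\infty)$.

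Finally, concavity of $h_t$ should reduce to a quick differentiation: a direct computation gives $h_t'(a) = \frac{t^2\gamma}{2(1+t\sqrt{a})}$, which is manifestly decreasing in $a \geq 0$, so $h_t$ is concave. This closes the argument. There is no substantive obstacle here; the only subtle point is justifying the interchange of $\E$ and $\int\dd\mu$, which is handled by the simplex constraint keeping the argument of $\Phi$ uniformly bounded below by a positive constant, so that Tonelli (for the nonnegative integrand $e^{-tY}$) applies. Everything else is a clean one-dimensional concavity check.
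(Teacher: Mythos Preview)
Your proof is correct and follows essentially the same route as the paper: Bernstein's representation plus the $\Gamma(\gamma)$ moment generating function reduce \eqref{eq:Phi} to an integral over $t$ of $e^{-ct}\prod_j e^{t\gamma\sqrt{a_j}}(1+t\sqrt{a_j})^{-\gamma}$, and the Schur-concavity of each integrand is checked by the same derivative computation $h_t'(a)=\frac{t^2\gamma}{2(1+t\sqrt{a})}$ (the paper packages this as Lemma~\ref{lm:F} via Schur--Ostrowski, you phrase it as concavity of the one-variable summand after taking logs, which is equivalent). Your explicit justification of the positivity of the argument of $\Phi$ via Cauchy--Schwarz and of the Fubini/Tonelli interchange is a welcome clarification that the paper leaves implicit.
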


We emphasise that the centering of the $X_j$ by its mean $\E X_j = \gamma$ is crucial for this result to hold. Without the centering, the resulting function is Schur-convex, as will follow from our proof. 

\begin{theorem}\label{thm:Phi0}
For a completely monotone function $\Phi\colon (0,+\infty) \to (0,+\infty)$, the function
\begin{equation}\label{eq:Phi0}
(a_1, \dots, a_n) \mapsto \E\Phi\left(\sum_{j=1}^n \sqrt{a_j}X_j\right)
\end{equation}
is Schur-convex on $\R_+^n$.
\end{theorem}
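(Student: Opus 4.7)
The plan is to reduce the statement to a one-variable convexity assertion via Bernstein's representation of $\Phi$, exactly mirroring the reduction that will be natural for Theorem \ref{thm:Phi} but without the centring. By Bernstein's theorem, $\Phi(x) = \int_0^\infty e^{-tx}\dd\mu(t)$ for some nonnegative Borel measure $\mu$ on $[0,\infty)$. Applying Tonelli (all integrands are nonnegative), using independence of the $X_j$, and recalling that the Laplace transform of $\Gamma(\gamma)$ is $\E e^{-sX_j} = (1+s)^{-\gamma}$ for $s \ge 0$, we obtain
\begin{equation*}
\E\Phi\left(\sum_{j=1}^n \sqrt{a_j}X_j\right) = \int_0^\infty \prod_{j=1}^n (1+t\sqrt{a_j})^{-\gamma}\dd\mu(t).
\end{equation*}
Since Schur-convexity of a family is preserved under mixtures against nonnegative measures, it therefore suffices to prove that, for each fixed $t > 0$, the symmetric function
\begin{equation*}
F_t(a_1,\dots,a_n) = \prod_{j=1}^n (1+t\sqrt{a_j})^{-\gamma} = \exp\left(-\gamma\sum_{j=1}^n g_t(a_j)\right), \qquad g_t(x) = \log(1+t\sqrt{x}),
\end{equation*}
is Schur-convex on $\R_+^n$.

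For the final step, I would invoke the standard Schur--Ostrowski criterion in its separable form: a symmetric function $\sum_j g_t(a_j)$ is Schur-concave on $\R_+^n$ if and only if $g_t$ is concave on $(0,\infty)$. Differentiating, $g_t'(x) = \frac{t}{2\sqrt{x}(1+t\sqrt{x})}$, and this is manifestly decreasing in $x > 0$, so $g_t$ is concave. Consequently $-\gamma\sum_j g_t(a_j)$ is Schur-convex, and composition with the increasing function $\exp$ preserves Schur-convexity, yielding the desired conclusion for $F_t$.

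I do not anticipate a genuine obstacle here; the argument is transparent because of the remarkably simple form of the $\Gamma(\gamma)$ Laplace transform, which factorises cleanly into a separable symmetric function of the $a_j$. The only thing to watch is the direction of the inequality: centering the $X_j$ by their mean $\gamma$ (as in Theorem \ref{thm:Phi}) would replace $\log(1+t\sqrt{a_j})$ by $\log(1+t\sqrt{a_j}) - t\gamma\sqrt{a_j}$, which is no longer concave in $a_j$ on the full positive half-line and reverses the Schur comparison on the simplex where the relevant domain constraints are imposed. Without the centering, no such subtlety arises, and the one-line concavity check above is the whole story.
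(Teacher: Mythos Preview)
Your proof is correct and follows essentially the same route as the paper: Bernstein's representation reduces the claim to the Schur-convexity of $\prod_j (1+t\sqrt{a_j})^{-\gamma}$ for each fixed $t>0$, which the paper verifies directly via the Schur--Ostrowski criterion (its Lemma~\ref{lm:F}) and you verify via the equivalent separable-sum criterion by checking concavity of $x\mapsto\log(1+t\sqrt{x})$. (One small slip in your closing commentary, not affecting the proof: with centring the replacement is $g_t(x)\to\log(1+t\sqrt{x})-t\sqrt{x}$, without the extra factor of $\gamma$, since $\gamma$ is already factored out of the exponent.)
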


The main result of \cite{BNZ} follows as a corollary to Theorem \ref{thm:Phi}.

\begin{corollary}[Bartczak-Nayar-Zwara, \cite{BNZ}]\label{cor:log}
Provided that $\gamma n \geq 1$, we have for the Shannon entropy,
\begin{equation}\label{eq:ent}
h\left(\sum_{j=1}^n \sqrt{a_j}X_j\right) \leq h\left(\sum_{j=1}^n \frac{1}{\sqrt{n}}X_j\right),
\end{equation}
whenever $\sum a_j = 1$.
\end{corollary}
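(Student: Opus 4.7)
The plan is to combine Theorem~\ref{thm:Phi}, applied to the completely monotone family $\Phi_s(x)=x^{-s}$ for $s>0$ (CM since $x^{-s}=\Gamma(s)^{-1}\int_0^\infty t^{s-1}e^{-tx}\,\dd t$), with a Gibbs' (cross-entropy) upper bound against the density of the centered uniform sum. Since differential entropy is translation-invariant, set $\widetilde S_a := S_a - \gamma\sum_j\sqrt{a_j} = \sum_j\sqrt{a_j}(X_j-\gamma)$, so that $h(S_a)=h(\widetilde S_a)$. The centered uniform sum has the explicit density
\[
g_u(z) = \frac{\sqrt{n}}{\Gamma(n\gamma)}(\sqrt{n}z+n\gamma)^{n\gamma-1}e^{-(\sqrt{n}z+n\gamma)},\qquad z>-\gamma\sqrt{n}.
\]
Using $\E\widetilde S_a=0$ and the identity $\sqrt{n}\widetilde S_a+n\gamma=\sqrt{n}(S_a+\gamma B)$, where $B=B(a):=\sqrt{n}-\sum_j\sqrt{a_j}\ge 0$ on the simplex (by Cauchy--Schwarz), a short computation yields
\[
-\E\log g_u(\widetilde S_a) \;=\; C_{n,\gamma} - (n\gamma-1)\,\E\log(S_a+\gamma B),
\]
for a constant $C_{n,\gamma}$ independent of $a$, a quantity which specializes to $h(S_u)$ at $a=u$ (since $B(u)=0$). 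Applying Gibbs' inequality $h(\widetilde S_a)\le -\E\log g_u(\widetilde S_a)$ and subtracting the identity at $a=u$ produces
\[
h(S_a)-h(S_u)\;\le\; -(n\gamma-1)\bigl[\E\log(S_a+\gamma B)-\E\log S_u\bigr].
\]

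Since $n\gamma\ge 1$ by hypothesis, it remains to prove $\E\log(S_a+\gamma B)\ge \E\log S_u$ for every $a$ on the simplex; this is where Theorem~\ref{thm:Phi} enters. Fix $c>\gamma\sqrt{n}$ so that the simplex lies in $\{\sum a_j<c^2/(\gamma^2 n)\}$, and set $Z_a(c):=c+\sum_j\sqrt{a_j}(X_j-\gamma)$. Theorem~\ref{thm:Phi} with $\Phi=\Phi_s$ gives Schur-concavity of $a\mapsto \E[Z_a(c)^{-s}]$ for each $s>0$. Writing $x^{-s}=1-s\log x+O(s^2)$ and comparing any majorization-ordered pair before dividing by $s>0$ and sending $s\to 0^+$ upgrades this to Schur-\emph{convexity} of $a\mapsto \E\log Z_a(c)$ on the same set. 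Letting $c\downarrow \gamma\sqrt{n}$ (monotone/dominated convergence) specializes this, on the simplex, to Schur-convexity of $a\mapsto \E\log(S_a+\gamma B(a))$. Since the uniform vector $u$ is majorized by every vector on the simplex, any symmetric Schur-convex function is minimized at $u$, where $B(u)=0$; hence $\E\log(S_a+\gamma B)\ge \E\log S_u$, and the corollary follows.

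The principal technical point is the justification of the two limit exchanges in the last paragraph: the $s\to 0^+$ expansion and the $c\downarrow \gamma\sqrt{n}$ limit needed to reach the boundary of the admissible region in Theorem~\ref{thm:Phi}. Both reduce to routine dominated convergence, using that $S_a+\gamma B>0$ almost surely (as $X_j>0$ a.s.) and that $\E|\log(S_a+\gamma B)|$ is uniformly bounded on compact subsets of the simplex by the rapid decay of the Gamma distribution and the integrability of $\log x$ against $x^{\gamma-1}\dd x$ near the origin.
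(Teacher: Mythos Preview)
Your proof is correct and follows essentially the same approach as the paper: both use the Gibbs (cross-entropy) bound against the density of the uniform-weight sum, reduce the resulting inequality to the Schur-convexity of $a\mapsto \E\log\bigl(c+\sum_j\sqrt{a_j}(X_j-\gamma)\bigr)$ obtained from Theorem~\ref{thm:Phi} via the limit $\Phi_s(x)=x^{-s}\to -\log x$, and finally pass to the boundary $c\downarrow\gamma\sqrt{n}$. Your quantity $S_a+\gamma B(a)$ is precisely $\gamma\sqrt{n}+\widetilde S_a$, so the two arguments coincide up to notation.
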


For general $\alpha$-R\'enyi entropies, we can deduce the same, but using Theorem \ref{thm:Phi0} and imposing additional restrictions on the parameters $\alpha, \gamma$ and $n$. This can be compared with results for Gaussian mixtures (Theorem 8 in \cite{ENT1}) as well as sums of uniform random variables (Theorem 2 in \cite{CGT}).

\begin{corollary}\label{cor:renyi}
Let $\alpha > 1$, $n\gamma < 1$ and $\sum_{j=1}^n a_j = 1$. We have,
\begin{equation}\label{eq:renyi}
h_\alpha\left(\sum_{j=1}^n \sqrt{a_j}X_j\right) \leq h_\alpha\left(\sum_{j=1}^n \frac{1}{\sqrt{n}}X_j\right).
\end{equation}
\end{corollary}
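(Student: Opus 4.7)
The plan is to reduce the corollary to a Schur-convexity statement. Since $\alpha > 1$ makes $t \mapsto \tfrac{1}{1-\alpha}\log t$ decreasing, the inequality \eqref{eq:renyi} is equivalent to
\[
I(a) := \int f_{S_a}(x)^\alpha\, \dd x \; \geq \; I(\tfrac{1}{n},\dots,\tfrac{1}{n}),
\]
and as $(\tfrac1n,\dots,\tfrac1n)$ is majorised by every $a$ on the simplex, it suffices to show that $I$ is Schur-convex there. Here $S_a = \sum_j \sqrt{a_j}X_j$. The hypothesis $n\gamma < 1$ controls integrability via the singularity $f_{S_a}(x)\sim \tfrac{1}{\Gamma(n\gamma)\prod_j a_j^{\gamma/2}}\, x^{n\gamma-1}$ at $0$: $I(a)$ is finite precisely when $\alpha < 1/(1-n\gamma)$, and otherwise both sides of \eqref{eq:renyi} are $-\infty$ trivially.

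For $\alpha = 2$ I would apply Plancherel with the characteristic function $\phi_{S_a}(t) = \prod_j(1-i\sqrt{a_j}t)^{-\gamma}$, substitute $u = t^2$, use the identity $\prod_j(1+a_j u)^{-\gamma} = \E e^{-uT_a}$ with $T_a := \sum_j a_j X_j'$ (for independent copies $X_j'$), and carry out the ensuing Gaussian integral to get
\[
I(a) = \frac{1}{2\sqrt\pi}\,\E\, T_a^{-1/2}.
\]
A rerun of the proof of Theorem \ref{thm:Phi0} with $\sqrt{a_j}$ replaced by $a_j$ (which works because $a \mapsto -\gamma\log(1+as)$ is still convex in $a$ for every $s>0$) shows that $a \mapsto \E\Phi(\sum_j a_j X_j')$ is Schur-convex for any completely monotone $\Phi$; taking $\Phi(x) = x^{-1/2}$ closes the case $\alpha = 2$.

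For general $\alpha > 1$ I would extend the same scheme. For integer $\alpha = m \geq 3$ the starting point is the $(m-1)$-dimensional Plancherel-type identity
\[
I(a) = \frac{1}{(2\pi)^{m-1}}\int_{\sum_k t_k = 0}\prod_{k=1}^m \phi_{S_a}(t_k)\, \dd\vec t,
\]
which one would try to rewrite as a negative-moment expectation involving independent copies of $T_a$, generalising $\E T_a^{-1/2}$; real $\alpha \in (1, 1/(1-n\gamma))$ would then follow by a continuity/convexity argument in $\alpha$. The main obstacle is precisely this extension: for $m\geq 3$ the integrand $\prod_k \phi_{S_a}(t_k)$ is genuinely complex (for $m=3$ one computes $\prod_k(1 - i\sqrt{a_i} t_k) = 1 + \tfrac{a_i}{2}\sum_k t_k^2 - i a_i^{3/2}\, t_1 t_2 t_3$), so Schur-convexity of the modulus alone is insufficient and one has to exploit the reality of the integral---via the symmetry $\vec t \mapsto -\vec t$ and the phase cancellations it induces---together with a more delicate coordinate-wise comparison.
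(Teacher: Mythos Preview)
Your reduction to Schur-convexity of $I(a)=\int f_{S_a}^\alpha$ is correct, and your treatment of $\alpha=2$ via Plancherel and the Laplace-transform identity $\prod_j(1+a_ju)^{-\gamma}=\E e^{-uT_a}$ is a nice alternative to the paper's argument. However, the proof as written is incomplete: for $\alpha\neq 2$ you only sketch a program, and you yourself name the obstacle (the integrand $\prod_k\phi_{S_a}(t_k)$ is genuinely complex for $m\ge 3$, so Schur-convexity of $|\phi|$ does not suffice). The appeal to ``continuity/convexity in $\alpha$'' to pass from integers to reals is also unjustified. As it stands you have proved the corollary only for $\alpha=2$.

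The paper avoids this entirely by \emph{not} trying to show $I(a)$ itself is Schur-convex. Instead it uses H\"older's inequality as a variational lower bound,
\[
\Big(\int f^\alpha\Big)^{1/\alpha}\Big(\int g^\alpha\Big)^{(\alpha-1)/\alpha}\ \geq\ \int f\,g^{\alpha-1}
=\E\,\Phi\!\left(\sum_j\sqrt{a_j}X_j\right),
\]
with $g$ the density at the uniform vector and $\Phi=g^{\alpha-1}$. The point is that $\Phi(x)=c\,x^{(n\gamma-1)(\alpha-1)}e^{-\sqrt{n}(\alpha-1)x}$ is completely monotone precisely when $n\gamma<1$ and $\alpha>1$, so Theorem~\ref{thm:Phi0} gives $\E\Phi(\sum\sqrt{a_j}X_j)\ge\E\Phi(\sum X_j/\sqrt n)=\int g^\alpha$ directly, and one divides through. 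This handles all $\alpha>1$ in one stroke, with no Fourier analysis and no case distinction. Your Plancherel route, by contrast, ties the tractability of the problem to the specific value of $\alpha$; what it buys at $\alpha=2$ is an explicit closed form $I(a)=\frac{1}{2\sqrt\pi}\,\E T_a^{-1/2}$, which the paper's argument does not produce.
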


We also have Schur-convexity for power functions with integral exponents. This relates to Question 6 from \cite{BNZ}, except that here we are only able to handle \emph{centred} moments of even order.

\begin{theorem}\label{thm:Sk}
For every positive integer $k$, the function
\begin{equation}\label{eq:Sk}
(a_1, \dots, a_n) \mapsto \E\left(\sum_{j=1}^n \sqrt{a_j}(X_j-\gamma)\right)^k
\end{equation}
is Schur-convex on $\R_+^n$.
\end{theorem}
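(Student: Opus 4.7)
My plan is to route through the moment generating function, passing via cumulants to moments. Let $S_a = \sum_{j=1}^n \sqrt{a_j}(X_j - \gamma)$ and define the power-sum $P_m(a) = \sum_{j=1}^n a_j^{m/2}$. Since $\E e^{t(X_j-\gamma)} = e^{-t\gamma}(1-t)^{-\gamma}$ for $t<1$, the cumulant generating function of the centered single variable is
\[
\psi(t) \;=\; -\gamma t - \gamma\log(1-t) \;=\; \gamma \sum_{m\geq 2} \frac{t^m}{m},
\]
which contains no linear term. By independence,
\[
\log \E e^{tS_a} \;=\; \sum_{j=1}^n \psi(t\sqrt{a_j}) \;=\; \gamma \sum_{m\geq 2} \frac{P_m(a)}{m}\, t^m,
\]
so the cumulants of $S_a$ are $\kappa_1(S_a)=0$ and $\kappa_m(S_a) = \gamma(m-1)!\,P_m(a)$ for $m\geq 2$.

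Next, I would invoke the moment-cumulant formula (obtained by expanding $\exp$ of the cumulant series):
\[
\E S_a^k \;=\; \sum_{\pi \in \mathcal{P}_k^*} \prod_{B \in \pi} \kappa_{|B|}(S_a) \;=\; \sum_{\pi \in \mathcal{P}_k^*} \gamma^{|\pi|} \prod_{B \in \pi} (|B|-1)!\,P_{|B|}(a),
\]
where $\mathcal{P}_k^*$ denotes the set partitions of $\{1,\dots,k\}$ whose blocks all have size at least $2$; partitions containing singletons drop out precisely because $\kappa_1=0$. All coefficients appearing in this finite sum are non-negative.

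It then remains to observe that for every $m\geq 2$, the function $P_m$ is non-negative, symmetric, and convex on $\R_+^n$ (being a sum of the one-variable convex functions $x\mapsto x^{m/2}$), hence Schur-convex. Since the product of non-negative Schur-convex functions is again Schur-convex (directly from the Schur-Ostrowski criterion: differentiating a product and using $f,g\geq 0$), and non-negative combinations of Schur-convex functions are Schur-convex, the conclusion follows at once. The only real obstacle is conceptual rather than technical: the argument hinges on $\kappa_1=0$, i.e. on the centering of the $X_j$ by $\gamma$; without it, singleton blocks would contribute factors $P_1(a)=\sum_j\sqrt{a_j}$, which is Schur-\emph{concave} rather than Schur-convex, and the entire argument would collapse. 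This is why the analogous statement fails without centering, and in fact matches the general philosophy of Theorem \ref{thm:Phi} versus Theorem \ref{thm:Phi0}.
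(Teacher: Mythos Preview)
Your argument is correct. Both your approach and the paper's go through the moment generating function and hinge on the same key point---that centering by $\gamma$ kills the linear term in the cumulant generating function---but they extract Schur-convexity of the coefficient of $t^k$ in different ways. The paper differentiates $F(t,a)=\E e^{tS_a}$ directly in $a_i,a_j$, factors $(\partial_{a_j}-\partial_{a_i})F = F\cdot\frac{\gamma t^3}{2(1-t\sqrt{a_j})(1-t\sqrt{a_i})}(\sqrt{a_j}-\sqrt{a_i})$, and then reads off the sign of each Taylor coefficient using that $F$ has nonnegative Taylor coefficients (their Lemma on $\E(X_1-\gamma)^k\ge 0$, which is exactly your observation that all cumulants are $\ge 0$). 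You instead pass through the moment--cumulant formula to exhibit $\E S_a^k$ explicitly as a nonnegative combination of products of the power sums $P_m(a)=\sum_j a_j^{m/2}$ with $m\ge 2$, and then use that each $P_m$ is symmetric, nonnegative and convex (hence Schur-convex), together with closure of nonnegative Schur-convex functions under products and nonnegative sums. Your route is a bit more structural and makes the role of $\kappa_1=0$ (and thus of the centering) very transparent; the paper's route is slightly more hands-on but avoids invoking the set-partition moment--cumulant formula. Either way the substance is the same.
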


\subsection{Maximum density}

Let $a_1 \geq \dots \geq a_n > 0$, $\sum_{j=1}^n a_j = 1$. The main result of Bobkov-Naumov-Ulyanov from \cite{BNU} asserts that when $\gamma = 1/2$ (i.e. when $X_j$ has the same distribution as $\frac{1}{2}Z^2$), we have
\begin{equation}\label{eq:BNU}
\frac{1}{2e^2\sqrt{2\pi}}(1-a_1)^{-1/4} \leq M\left(\sum_{j=1}^n \sqrt{a_j}X_j\right) \leq \frac{4}{\sqrt{\pi}}(1-a_1)^{-1/4},
\end{equation}
Using their approach, we extend this to $\gamma \geq 1/2$. For $\gamma \geq 1$, $M$ is of the constant order, $\gamma^{-1/2}$ up to universal constants. For $\frac{1}{2} \leq \gamma < 1$, only the exponent in \eqref{eq:BNU} has to be modified (and of course the universal constants). 

\begin{theorem}\label{thm:M}
For $\gamma \geq 1$, we have
\begin{equation}\label{eq:g1}
\frac{1}{\sqrt{12}}\gamma^{-1/2} \leq M\left(\sum_{j=1}^n \sqrt{a_j}X_j\right) \leq \gamma^{-1/2}.
\end{equation}
For $\frac{1}{2} \leq \gamma < 1$, there are constants $c_\gamma$ and $C_\gamma$ for which we have
\begin{equation}\label{eq:g12}
c_\gamma(1-a_1)^{\frac{\gamma-1}{2}} \leq M\left(\sum_{j=1}^n \sqrt{a_j}X_j\right)  \leq C_\gamma(1-a_1)^{\frac{\gamma-1}{2}}.
\end{equation}
The lower bound in fact holds for every $0 < \gamma < 1$ and we can take $c_\gamma = 0.003\gamma$.
\end{theorem}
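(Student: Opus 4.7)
The proof splits at $\gamma = 1$, where log-concavity of the $\Gamma(\gamma)$ density either holds or fails.

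For $\gamma \geq 1$, the density $\Gamma(\gamma)^{-1}x^{\gamma-1}e^{-x}$ is log-concave on $(0,\infty)$ since $(\gamma-1)\log x - x$ is concave there, so every $\sqrt{a_j}X_j$ has a log-concave density and (by Pr\'ekopa--Leindler) so does $S_a := \sum_{j=1}^n \sqrt{a_j}X_j$. I would then invoke the standard two-sided bound for a one-dimensional log-concave density $f$ with standard deviation $\sigma$,
$$\frac{1}{\sqrt{12}\,\sigma} \leq \|f\|_\infty \leq \frac{1}{\sigma},$$
whose extremisers within the log-concave class are the uniform and the exponential distributions respectively. Since $\Var(S_a) = \gamma \sum_j a_j = \gamma$, plugging $\sigma = \sqrt{\gamma}$ yields \eqref{eq:g1} immediately.

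For the upper bound in $\tfrac{1}{2}\leq \gamma<1$, I would follow the Fourier-inversion approach of \cite{BNU}. The characteristic function of $S_a$ is $\varphi_a(t) = \prod_{j=1}^n (1-it\sqrt{a_j})^{-\gamma}$, with modulus $\prod_{j=1}^n(1+t^2a_j)^{-\gamma/2}$, and this is integrable as soon as at least two weights are present (the trivial case $a_1 = 1$ being handled separately). Fourier inversion gives
$$M(S_a) \leq \frac{1}{2\pi}\int_\R \prod_{j=1}^n (1+t^2a_j)^{-\gamma/2}\,dt.$$
I would isolate the factor $(1+t^2a_1)^{-\gamma/2}$ and lower-bound the remaining product by one expression involving only $1-a_1$ (for instance via $\prod_{j\geq 2}(1+t^2a_j)\geq 1 + t^2(1-a_1)$, or a sharper AM--GM based estimate when more factors are available). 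The substitution $s = t\sqrt{1-a_1}$ then extracts the factor $(1-a_1)^{(\gamma-1)/2}$, leaving a convergent integral depending only on $\gamma$, which furnishes the constant $C_\gamma$.

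For the lower bound, valid for all $0<\gamma<1$, I would exploit the singularity of the $\Gamma(\gamma)$ density at $0^+$. Split $S_a = \sqrt{a_1}X_1 + R$ with $R := \sum_{j\geq 2}\sqrt{a_j}X_j$ independent of $X_1$, having density $g$, mean $\mu := \gamma\sum_{j\geq 2}\sqrt{a_j}$ and variance $\gamma(1-a_1)$. Since $u\mapsto u^{\gamma-1}$ is decreasing for $\gamma<1$, for any $\delta>0$ and $x\in\R$,
$$f_{S_a}(x) \geq \int_{x-\delta}^{x}\frac{(x-y)^{\gamma-1}}{a_1^{\gamma/2}\,\Gamma(\gamma)}\,e^{-(x-y)/\sqrt{a_1}}\,g(y)\,dy \;\geq\; \frac{e^{-\delta/\sqrt{a_1}}\,\delta^{\gamma-1}}{a_1^{\gamma/2}\,\Gamma(\gamma)}\,\pp\bigl(R\in[x-\delta,x]\bigr).$$
When $a_1\geq 1/2$, choosing $\delta = 2\sqrt{2\gamma(1-a_1)}$ and $x = \mu + \delta/2$, Chebyshev gives $\pp(R\in[x-\delta,x])\geq 1/2$; the factors $e^{-\delta/\sqrt{a_1}}\geq e^{-2\sqrt{2\gamma}}$ and $a_1^{-\gamma/2}\geq 1$ are bounded below by absolute constants, while $1/\Gamma(\gamma)\geq \gamma$ on $(0,1]$ (since $\Gamma(\gamma+1)\leq 1$ there), so the resulting bound is of the shape $\gtrsim \gamma(1-a_1)^{(\gamma-1)/2}$. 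In the complementary regime $a_1\leq 1/2$ we have $(1-a_1)^{(\gamma-1)/2}\leq \sqrt{2}$, and a direct Chebyshev applied to $S_a$ (with variance $\gamma$) already gives $M(S_a)\geq 1/(3\sqrt{3\gamma})$, which easily dominates $0.003\gamma\sqrt{2}$ for all $\gamma\leq 1$. The main obstacle is the careful bookkeeping of all numerical constants, in particular balancing $\delta$ optimally and controlling $1/\Gamma(\gamma)$ as $\gamma\to 0$, needed to reach the explicit prefactor $0.003\gamma$.
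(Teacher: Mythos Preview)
Your treatment of $\gamma\geq 1$ and of the lower bound for $0<\gamma<1$ is correct and essentially coincides with the paper's: the paper invokes Moriguti's inequality and the Fradelizi--Bobkov--Chistyakov log-concave bound for \eqref{eq:g1}, and for the lower bound in \eqref{eq:g12} it also splits at $a_1=1/2$, uses the convolution representation with $X_1$, and applies Chebyshev to the remainder (with the fixed window width $4$ in the rescaled variable rather than your $a$-dependent $\delta$, but the mechanism is identical).

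The upper bound for $\tfrac12\leq\gamma<1$, however, has a genuine gap. After your substitution $s=t\sqrt{1-a_1}$, the integral
\[
\int_{\R}\Big(1+s^2\tfrac{a_1}{1-a_1}\Big)^{-\gamma/2}(1+s^2)^{-\gamma/2}\,ds
\]
still depends on $a_1$; it is \emph{not} a function of $\gamma$ alone. Worse, your crude bound $\prod_{j\geq 2}(1+t^2a_j)\geq 1+t^2(1-a_1)$ collapses the tail to two effective factors, so the integrand of $\int|\varphi_a|$ behaves like $|t|^{-2\gamma}$ at infinity. At the endpoint $\gamma=\tfrac12$ (which the theorem includes) this is $|t|^{-1}$ and the integral \emph{diverges}; indeed for $n=2$ and $\gamma=\tfrac12$ the characteristic function is genuinely not in $L^1$, so no pure Fourier-inversion bound of the form $M\leq\frac{1}{2\pi}\int|\varphi_a|$ can succeed there. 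Even for $\gamma$ slightly above $\tfrac12$ your constant would blow up like $(2\gamma-1)^{-1}$.

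The paper avoids this obstruction by not relying on Fourier inversion alone: it handles the two-term case $n=2$ by a direct pointwise estimate of the convolution density (extracting the factor $a_2^{(\gamma-1)/2}$ from an explicit integral), and for $n\geq 3$ it either (i) bounds $M$ by a constant via Fourier inversion when $a_1\leq\tfrac13$ (now three effective factors give decay $|t|^{-3\gamma/2}$, integrable for $\gamma\geq\tfrac12$), or (ii) writes $S_a=\sqrt{\alpha}\eta+\sqrt{1-\alpha}\xi$ and convolves the pointwise density bound on $\eta$ from Lemma~\ref{lm:density} against the bounded density of $\xi$, or reduces to the $n=2$ case when $\xi$ itself has a dominant weight. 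Your Fourier route can be repaired along these lines (keep at least three factors when available and treat $n=2$ separately in real space), but as written the key step does not go through.
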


{\red Bounds \eqref{eq:g1} do not use the particular structure of the sum $\sum \sqrt{a_j}X_j$: the lower bound holds for \emph{all} random variables, whereas the upper bound holds for \emph{all log-concave} random variables (see Lemmas \ref{lm:Moriguti} and \ref{lm:BCh} below) with the constant $1$ in front of $\gamma^{-1/2}$ being in fact optimal (attained for the one-sided exponential distribution, so when $n=1$ and $\gamma=1$).}
For $\gamma < 1$, we will prove slightly more general results, allowing to justify the following remark.

\begin{remark}\label{rem:M}
For $\gamma < 1$, $M = +\infty$ regardless of $a$ as long as $n \leq \lfloor 1/\gamma \rfloor$ (e.g. see Lemma \ref{lm:density} below). For $\gamma < \frac{1}{2}$, we only know the matching lower and upper bounds when $n = \lfloor 1/\gamma \rfloor + 1$ (see Remark \ref{rem:smalln} in the next section). 
The case of arbitrary $n$ has been elusive and we find it an interesting question.
\end{remark}

{\red
One final comment is in place: all of our results can be naturally interpreted in information theoretic language as R\'enyi entropy bounds. There has been significant amount of work devoted to such bounds, see for instance \cite{BN, BCh2, BMM, BM, Li, LMM, MMX, MNT, MM, MT, RS} for recent results and additional references.
In particular, inequality \eqref{eq:g12} can be recast as a comparison between the (contiuous) $\infty$-R\'{e}nyi entropy of $\sum_{j=1}^n \sqrt{a_j}X_j$ and the (discrete) $\infty$-R\'enyi entropy of the coefficient vector $a=(a_1,\ldots,a_n)$: up to multiplicative constants, we have
\[
1-e^{-h_\infty(a)} \approx_\gamma e^{\frac{2}{1-\gamma}h_\infty\left(\sum_{j=1}^n \sqrt{a_j}X_j\right)}.
\]
}

\section{Proofs}

We note for future use the formula for the moment generating function of a $\Gamma(\gamma)$ random variable $X$: for $t < 1$,
\begin{equation}\label{eq:mgf}
\E e^{t X} = (1-t)^{-\gamma}.
\end{equation}

\subsection{Proof of Theorems \ref{thm:Phi} and \ref{thm:Phi0}}

We begin with a lemma.

\begin{lemma}\label{lm:F}
Let $\gamma > 0$. The function $F(x_1,\dots,x_n) = \prod_{j=1}^n e^{\gamma\sqrt{x_j}}(1+\sqrt{x_j})^{-\gamma}$ is Schur-concave on $\R_+^n$, whereas the function $G(x_1,\dots,x_n) = \prod_{j=1}^n (1+\sqrt{x_j})^{-\gamma}$ is Schur-convex on $\R_+^n$.
\end{lemma}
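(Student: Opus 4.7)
Both $F$ and $G$ are symmetric, strictly positive functions of product form $\prod_{j=1}^n \phi(x_j)$. The plan is to exploit this separable structure: since $\log$ is strictly increasing, Schur-convexity (respectively Schur-concavity) of $F$ or $G$ is equivalent to the same property for $\log F$, $\log G$. But these logarithms are symmetric sums of a single-variable function evaluated at the coordinates, so by the standard criterion (a symmetric sum $\sum g(x_j)$ is Schur-convex iff $g$ is convex, Schur-concave iff $g$ is concave), the problem reduces to one-variable convexity/concavity checks.

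Concretely, write $\log G(x_1,\dots,x_n) = \sum_j g(x_j)$ with $g(x) = -\gamma\log(1+\sqrt{x})$, and $\log F(x_1,\dots,x_n) = \sum_j f(x_j)$ with $f(x) = \gamma\sqrt{x} - \gamma\log(1+\sqrt{x})$. I therefore need to show $g$ is convex on $(0,\infty)$ and $f$ is concave on $(0,\infty)$. Both reduce to a direct second-derivative computation. A short calculation gives
\[
g''(x) = \frac{\gamma(1+2\sqrt{x})}{4\,x\sqrt{x}\,(1+\sqrt{x})^2} > 0,
\]
and noticing the cleaner form $f'(x) = \frac{\gamma}{2(1+\sqrt{x})}$ (the $\gamma\sqrt{x}$ and $-\gamma\log(1+\sqrt{x})$ pieces combine nicely), one gets
\[
f''(x) = -\frac{\gamma}{4\sqrt{x}\,(1+\sqrt{x})^2} < 0.
\]
These two inequalities give the desired convexity of $g$ and concavity of $f$, and hence the Schur-convexity of $G$ and Schur-concavity of $F$.

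There is no real obstacle here: once the reduction via logarithms is in place, the whole statement collapses to verifying the sign of a scalar second derivative. The only subtlety worth remarking on is the cancellation in $f'$, which is what makes the centering in Theorem \ref{thm:Phi} (and the exponential factor $e^{\gamma\sqrt{x_j}}$ in $F$) produce concavity rather than convexity; this is precisely the analytic echo of the comment after Theorem \ref{thm:Phi} that centering reverses the direction of monotonicity.
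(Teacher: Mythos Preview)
Your proof is correct and essentially the same as the paper's: both compute $\frac{\partial}{\partial x_k}\log F = \frac{\gamma}{2(1+\sqrt{x_k})}$ and use that this is decreasing in $x_k$ (you phrase this as concavity of $f$ and invoke the ``sum of a concave function is Schur-concave'' criterion, the paper applies Schur--Ostrowski directly, but these are the same observation). The only cosmetic difference is that you first pass to $\log F$ explicitly, which is a clean way to organise the argument.
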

\begin{proof}
For function $F$, we have,
\begin{align*}
\frac{1}{F(x)}\frac{\partial F}{\partial x_k} = \frac{\partial}{\partial x_k}\log F &= \gamma\frac{\partial}{\partial x_k}\left(\sqrt{x_k}-\log(1+\sqrt{x_k})\right) \\
&= \frac{\gamma}{2\sqrt{x_k}}\left(1 - \frac{1}{1+\sqrt{x_k}}\right) \\
&= \frac{\gamma}{2}\frac{1}{1+\sqrt{x_k}}.
\end{align*}
Thus, if $x_k > x_l$, then
\[
\frac{\partial F}{\partial x_k} - \frac{\partial F}{\partial x_l} = \frac{\gamma}{2} F(x)\left(\frac{1}{1+\sqrt{x_k}} - \frac{1}{1+\sqrt{x_l}}\right) < 0.
\]
The Schur-Ostrowski criterion finishes the proof for $F$. For function $G$, the argument proceeds identically.
\end{proof}

\begin{proof}[Proof of Theorem \ref{thm:Phi}]
Since $\Phi$ is completely monotone, there is a (nonnegative) Borel measure on $\mu$ such that
\[
\Phi(x) = \int_0^\infty e^{-t x} \dd \mu(t).
\]
Thus, thanks to independence and \eqref{eq:mgf},
\[
\E\Phi\left(c + \sum_{j=1}^n \sqrt{a_j}(X_j-\gamma)\right) = \int_{0}^\infty \left(\prod_{j=1}^n e^{\gamma t\sqrt{a_j}}(1+t\sqrt{a_j})^{-\gamma}\right) e^{-ct}\dd\mu(t).
\]
Lemma \ref{lm:F} finishes the proof.
\end{proof}

\begin{remark}
We emphasise that the factor $e^{\gamma \sqrt{a_j}}$ appears as a result of centreing the $X_j$. This factor is crucial for function $F$ from Lemma \ref{lm:F} to be Schur-concave, as without it, as we have seen, it is Schur-convex. Theorem \ref{thm:Phi0} follows analogously.
\end{remark}

\subsection{Proof of Corollary \ref{cor:log}}

First note that applying Theorem \ref{thm:Phi} to $\Phi(x) = x^{-q}$ with $q \to 0+$ and using that $\frac{x^{-q}-1}{q} \downarrow -\log x$, as $q \downarrow 0+$ for positive $x$, we conclude that Theorem \ref{thm:Phi} also holds with $\Phi(x) = -\log x$. To prove \eqref{eq:ent}, fix $c > \gamma\sqrt{n}$ and positive $a_j$ with $\sum_{j=1}^n a_j = 1$. Recall that for an arbitrary probability density function $g$, we have
\begin{align*}
h\left(\sum_{j=1}^n \sqrt{a_j}X_j\right) &= h\left(c+\sum_{j=1}^n \sqrt{a_j}(X_j-\gamma)\right) \\
&\leq \E \left[-\log g\left(c+ \sum_{j=1}^n \sqrt{a_j}(X_j-\gamma)\right)\right].
\end{align*}
Letting $g$ be the density of $\sum_{j=1}^n \frac{1}{\sqrt{n}}X_j$, that is
\[
g(x) = \frac{n^{(\gamma n-1)/2}}{\Gamma(\gamma n)}x_+^{\gamma n - 1}e^{-x\sqrt{n}},
\]
we thus obtain from the first part that (note that we need $\gamma n - 1 \geq 0$)
\begin{align*}
h\left(\sum_{j=1}^n \sqrt{a_j}X_j\right) &\leq \E \left[-\log g\left(c+ \sum_{j=1}^n \frac{1}{\sqrt{n}}(X_j-\gamma)\right)\right]
\end{align*}
With $c \to \gamma\sqrt{n}+$, the right hand side becomes $h\left(\sum_{j=1}^n \frac{1}{\sqrt{n}}X_j\right)$.\hfill$\square$

\subsection{Proof of Corollary \ref{cor:renyi}}

Suppose $\sum a_j = 1$, let $f$ be the density of $\sum \sqrt{a_j}X_j$ and $g$ be the density of $\sum X_j/\sqrt{n}$. Our goal is to show that $\int f^\alpha \geq \int g^\alpha$. By H\"older's inequality,
\[
\left(\int f^\alpha\right)^{\frac{1}{\alpha}}\left(\int g^\alpha\right)^{\frac{\alpha-1}{\alpha}} \geq \int fg^{\alpha-1}.
\]
Note that the right hand side reads $\E \Phi(\sum \sqrt{a_j}X_j)$ with 
\[\Phi(x) = g(x)^{\alpha-1} = \Big(\Gamma(n\gamma)^{-1}(x\sqrt{n})^{n\gamma-1}e^{-x\sqrt{n}}\Big)^{\alpha-1}
\]
which is completely monotone as a product of two completely monotone functions (hence the assumptions, to have $(n\gamma-1)(\alpha-1) <0$ and $\alpha-1>0$). It remains to apply Theorem \ref{thm:Phi0} to the sequence $(a_j)$ which always majorises the constant sequence $(\frac{1}{n})$.
\hfill$\square$

{\red
\begin{remark}
The application of H\"{o}lder's inequality in the form of a variational formula for the Renyi entropy, as in the proof of Corollary \ref{cor:renyi}, has been recently used in a number of information theoretic contexts and can be probably traced back to \cite{Yu}.
\end{remark}
}

\subsection{Proof of Theorem \ref{thm:Sk}}

First we prove a lemma about centred integral moments of a single summand.

\begin{lemma}\label{lm:mom}
For every positive integer $k$, $\E(X_1-\gamma)^k \geq 0$.
\end{lemma}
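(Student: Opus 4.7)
The plan is to read off the nonnegativity of $\E(X_1-\gamma)^k$ directly from the Taylor expansion of the moment generating function of $X_1-\gamma$. By \eqref{eq:mgf}, for $|t|<1$ we have
\[
\E e^{t(X_1-\gamma)} = e^{-\gamma t}(1-t)^{-\gamma} = \exp\bigl(K(t)\bigr), \qquad K(t) := -\gamma t - \gamma\log(1-t).
\]
Expanding $-\log(1-t)$ as a power series cancels the linear term in $-\gamma t$ and leaves
\[
K(t) = \gamma\sum_{j\geq 2}\frac{t^j}{j},
\]
a power series in $t$ whose coefficients are all \emph{nonnegative}.

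Next, I would observe that the class of formal power series in $t$ with nonnegative coefficients is closed under addition, multiplication, and scaling by nonnegative reals. In particular, $\exp(K(t)) = \sum_{m\geq 0} K(t)^m/m!$ is a nonnegative combination of products of $K(t)$, each of which has nonnegative coefficients; hence $\exp(K(t))$ itself has nonnegative Taylor coefficients at $0$. On the other hand, since the $\Gamma(\gamma)$ distribution has an MGF analytic on $\{|t|<1\}$, this expansion coincides with the moment series
\[
\exp(K(t)) = \E e^{t(X_1-\gamma)} = \sum_{k\geq 0}\frac{\E(X_1-\gamma)^k}{k!}\, t^k.
\]
Comparing coefficients gives $\E(X_1-\gamma)^k \geq 0$ for every $k$.

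There is no real obstacle here: the only thing to be careful about is the justification that matching formal power series coefficients agrees with the genuine moments, which is immediate from the analyticity of the gamma MGF at the origin. Equivalently, one could phrase the argument via cumulants: $\kappa_1=0$ and $\kappa_j = \gamma(j-1)!\geq 0$ for $j\geq 2$, and then invoke the complete Bell polynomial identity $\E(X_1-\gamma)^k = B_k(\kappa_1,\dots,\kappa_k)$ together with the fact that $B_k$ has nonnegative integer coefficients. Either formulation yields the lemma in a few lines.
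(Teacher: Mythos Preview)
Your proof is correct and follows exactly the paper's approach: write $\E e^{t(X_1-\gamma)} = \exp\bigl(\gamma\sum_{j\geq 2} t^j/j\bigr)$ and conclude that all Taylor coefficients are nonnegative because the exponential of a power series with nonnegative coefficients again has nonnegative coefficients. Your added remarks on analyticity and the Bell-polynomial reformulation are fine but not needed for the argument.
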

\begin{proof}
Rephrasing the lemma, it suffices to prove that the power-series expansion of the moment generating function $\E e^{t(X_1-\gamma)}$ has nonnegative coefficients. Invoking \eqref{eq:mgf} and using that $-\log(1-t) = \sum_{k=1}^\infty \frac{t^k}{k}$, we obtain
\[
\E e^{t(X_1-\gamma)} = \exp\left\{\gamma\left(-t - \log(1-t)\right)\right\} =  \exp\left\{\gamma\sum_{k=2}^\infty \frac{t^k}{k}\right\}.
\]
Since the power series expansion of $\exp$ has positive coefficients, the proof is complete.
\end{proof}

\begin{proof}[Proof of Theorem \ref{thm:Sk}]
Let $S = \sum_{j=1}^n \sqrt{a_j}(X_j-\gamma)$. Consider for sufficiently small positive $t$,
\begin{align*}
\E e^{tS} &= \sum_{k=0}^\infty \frac{t^k}{k!}\E S^k \\
&= \prod_{j=1}^n \exp\left\{-\gamma\Big(t\sqrt{a_j} + \log(1-t\sqrt{a_j})\Big)\right\}.
\end{align*}
Call the right hand side $F$. Fix two indices $i \neq j$ and observe that $\left(\frac{\partial}{\partial a_j} - \frac{\partial}{\partial a_i}\right)\E S^k$ is the Taylor coefficient of $\left(\frac{\partial}{\partial a_j} - \frac{\partial}{\partial a_i}\right)F$ at $t^k$. On the other hand,
\[
\frac{\partial F}{\partial a_j} = F\cdot \left(-\gamma\left(\frac{t}{2\sqrt{a_j}} - \frac{t}{2\sqrt{a_j}(1-t\sqrt{a_j})}\right)\right) = F\cdot\frac{\gamma t^2}{2(1-t\sqrt{a_j})}.
\]
Thus,
\[
\left(\frac{\partial}{\partial a_j} - \frac{\partial}{\partial a_i}\right)F = F\cdot \frac{\gamma t^3}{2(1-t\sqrt{a_j})(1-t\sqrt{a_i})}(\sqrt{a_j}-\sqrt{a_i}).
\]
For $a_j > a_i$, the power-series expansion of the right hand side has nonnegative coefficients ($F$ has, by Lemma \ref{lm:mom}, and plainly so does $(1-t\sqrt{a_j})^{-1}$). Combining this with the Schur-Ostrowski criterion finishes the argument.
\end{proof}

\subsection{Proof of Theorem \ref{thm:M}}

We assume throughout that $a_1 \geq a_2 \geq \dots$. For the proofs, we recall several lemmas. The first one is classical and goes back to Moriguti.

\begin{lemma}[Moriguti, \cite{Mo}]\label{lm:Moriguti}
For every random variable $X$, $M(X) \geq \frac{1}{\sqrt{12}}\frac{1}{\sqrt{\Var(X)}}$.
\end{lemma}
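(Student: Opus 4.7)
The statement $M(X)\ge \frac{1}{\sqrt{12}}\,\Var(X)^{-1/2}$ is scale- and translation-invariant in a simple way, so the plan is to normalize and then use a bathtub-principle argument. First I would translate $X$ so that $\E X=0$, which leaves both $M(X)$ and $\Var(X)=\E X^2$ unchanged, reducing the statement to the assertion
\[
\E X^2 \;\ge\; \frac{1}{12\, M(X)^2}
\]
for any mean-zero density $f$ with $\|f\|_\infty = M$.

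The key step is a bathtub-type variational claim: among all probability densities $f$ on $\R$ satisfying $f\le M$ pointwise, the integral $\int_\R x^2 f(x)\,\dd x$ is minimized by $f^*=M\cdot\1_{[-1/(2M),1/(2M)]}$. This is intuitively clear: to minimize $\int x^2 f$ while keeping $\int f=1$, one should push as much mass as possible toward $0$, and the constraint $f\le M$ forces saturation on a symmetric interval around the origin of total length $1/M$. Formally, I would argue by a rearrangement/exchange: if $f\neq f^*$, then there exist a set $A$ near the origin (inside the interval $I=[-1/(2M),1/(2M)]$) on which $f<M$ and a set $B$ farther from the origin (outside $I$) on which $f>0$, of equal measure, and moving mass from $B$ to $A$ strictly decreases $\int x^2 f$ while preserving $\int f=1$ and $f\le M$. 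So the infimum is attained at $f^*$, and any mean-zero density with $f\le M$ has second moment at least that of $f^*$.

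A direct computation then gives
\[
\int_{-1/(2M)}^{1/(2M)} x^2 \cdot M \,\dd x \;=\; \frac{2M}{3}\Big(\tfrac{1}{2M}\Big)^{3} \;=\; \frac{1}{12 M^2},
\]
so combining with the previous step,
\[
\Var(X)=\E X^2 \ge \frac{1}{12\, M(X)^2},
\]
which rearranges to the claimed inequality. The only place that requires mild care is the bathtub comparison; everything else is a one-line calculation. Since the mean-zero condition is used only to ensure the optimal interval is centered (the symmetric minimizer clearly majorises any other symmetric rearrangement about the mean), the exchange argument is the sole nontrivial ingredient.
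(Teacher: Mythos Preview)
Your argument is correct: the bathtub/rearrangement step showing that among densities bounded by $M$ the second moment is minimized by $M\cdot\1_{[-1/(2M),1/(2M)]}$ is exactly the right idea, and the computation $\int_{-1/(2M)}^{1/(2M)}Mx^2\,\dd x=1/(12M^2)$ is fine. One clean way to make the exchange argument rigorous (in place of the informal mass-swap) is to note that with $I=[-1/(2M),1/(2M)]$ one has $x^2\ge 1/(4M^2)$ on $I^c$, $x^2\le 1/(4M^2)$ on $I$, and $f-M\le 0$, so
\[
\int x^2 f-\int x^2 f^*=\int_{I^c}x^2 f+\int_I x^2(f-M)\ge \tfrac{1}{4M^2}\Big(\int_{I^c}f+\int_I(f-M)\Big)=0,
\]
using $\int f=\int f^*=1$.

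The paper does not give its own proof of this lemma; it simply records it as a classical result and cites Moriguti~\cite{Mo}. Your proof is the standard one (and essentially Moriguti's original argument), so there is nothing to compare beyond noting that you have supplied what the paper omits.
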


{\red The second lemma is a reverse bound for log-concave random variables, that is having densities of the form $e^{-V}$ for a convex function $V\colon \R\to(-\infty,+\infty]$.

\begin{lemma}[Fradelizi \cite{Fr}, Bobkov-Chistyakov, \cite{BCh}]\label{lm:BCh}
For every log-concave random variable $X$, $M(X) \leq \frac{1}{\sqrt{\Var(X)}}$.
\end{lemma}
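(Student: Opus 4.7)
The plan is to normalize the problem and reduce it to a two-parameter extremal family. By the scale invariance $X \mapsto \lambda X$, under which $M(X)$ and $1/\sqrt{\Var(X)}$ transform identically, I may assume $M(X) = \|f\|_\infty = 1$, in which case the claim becomes $\Var(X) \leq 1$. Translating so that a mode of $f$ lies at the origin, write $f = e^{-V}$ with $V \colon \R \to [0,\infty]$ convex and $V(0) = 0$; let $c_+ = V'_+(0) \geq 0$ and $c_- = -V'_-(0) \geq 0$ denote the one-sided slopes of $V$ at the origin.

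The key reduction is to show that among log-concave densities with $\|f\|_\infty = 1$ and $\int f = 1$, the variance is maximized by a two-sided exponential
\[
\tilde f(x) = e^{-\tilde c_+ x}\1_{x \geq 0} + e^{\tilde c_- x}\1_{x < 0},
\]
with $\tilde c_\pm \in (0,\infty]$ chosen so that $\int \tilde f = 1$. Heuristically, convexity of $V$ gives the tangent bounds $f(x) \leq e^{-c_+ x}$ on $[0,\infty)$ and $f(x) \leq e^{c_- x}$ on $(-\infty,0]$, so the variance-maximizing configuration should saturate these tangents; any rearrangement that moves mass outward from the mode while keeping the normalization and the maximum fixed can only increase $\Var(X)$. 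A rigorous version can be carried out either via the Kannan--Lov\'asz--Simonovits localization lemma in dimension one (which reduces Schur-type comparisons against log-concave densities to checks on log-affine pieces) or via a direct exchange/bathtub argument in the spirit of Fradelizi and Bobkov--Chistyakov.

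Once this reduction is in place, the rest is elementary. Setting $a = 1/\tilde c_+$ and $b = 1/\tilde c_-$, the normalization $\int \tilde f = 1$ becomes $a + b = 1$, and a direct computation yields $\E \tilde X = a^2 - b^2$ and $\E \tilde X^2 = 2(a^3 + b^3)$, whence
\[
\Var(\tilde X) = 2(a^3+b^3) - (a^2-b^2)^2 = 2(a^2 - ab + b^2) - (a-b)^2 = a^2 + b^2,
\]
using $a+b = 1$ in the middle identity. Since $a, b \geq 0$ and $a + b = 1$, we conclude $\Var(\tilde X) = a^2 + b^2 \leq (a+b)^2 = 1$, with equality iff $\min\{a,b\} = 0$, i.e., iff $\tilde f$ is a one-sided exponential; this matches the sharpness observation in the remark preceding the lemma.

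The main obstacle is the reduction step, not the final calculus. The pointwise envelope $f \leq e^{-c_\pm |x|}$ coming from convexity of $V$ typically fails to be a probability density, so one cannot compare $\Var(X)$ to $\Var(\tilde X)$ by a naive pointwise argument; the heart of the proof is to upgrade pointwise domination into a genuine variance inequality through a careful mass-redistribution (or by invoking localization). Once past this step, the closed-form computation above finishes the proof.
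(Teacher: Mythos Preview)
The paper does not give its own proof of this lemma: it is stated with attribution to Fradelizi and Bobkov--Chistyakov and then used as a black box in the proof of the upper bound in \eqref{eq:g1}. So there is no in-paper argument to compare your attempt against.

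Regarding your proposal itself: the normalization and the closed-form computation for the two-sided exponential are correct, and reducing to log-affine extremals is indeed the natural strategy. But, as you explicitly concede, the reduction step is only described, not carried out. You note that the tangent envelope $f(x)\le e^{-c_\pm|x|}$ need not be a probability density, so a naive pointwise comparison fails; you then assert that localization or a bathtub argument ``can'' fix this, without actually doing either. That is precisely the substance of the inequality, and leaving it unexecuted means what you have is an outline rather than a proof. Moreover, the appeal to localization is not entirely automatic here: the constraint $\|f\|_\infty=1$ is an $L^\infty$ condition, not one of the finitely many linear constraints $\int g_i\,\dd\mu=0$ to which the Kannan--Lov\'asz--Simonovits lemma directly applies, so even that route needs an additional reformulation (e.g.\ fixing a point where the maximum is attained and working with the two one-sided pieces separately). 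If you want a self-contained argument, one clean option is to show, for each side of the mode separately, that among log-concave densities on $[0,\infty)$ with $f(0)=1$ and prescribed mass $a$, the second moment is maximized by the exponential $e^{-x/a}$; this follows from a straightforward tail-comparison using log-concavity, and then your final computation applies verbatim.
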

}

{\red The third lemma  is a straightforward extension of Lemma 3 from \cite{BNU}.
It relies on the Fourier inversion formula to derive bounds on densities, allowing to leverage independence. This technique has been particularly fruitful in geometric questions concerning sharp bounds on volumes of sections of $\ell_p$-balls, perhaps pioneered by Hensley in his paper \cite{Hen} on the cube, see also Ball's celebrated work \cite{Ball} as well as Koldobsky's works \cite{K98, Kol} for an in-depth general treatment, with the topic enjoying significant recent activity, see, e.g., \cite{CKT, CNT, KRZ, KK, KoKo, LPP, MR, MR2}.
}

\begin{lemma}\label{lm:ch-fun}
If $a_1 \leq \frac{1}{m}$ for a positive integer $m$, then the characteristic function $\phi$ of $\sqrt{a_1}X_1 + \dots + \sqrt{a_n}X_n$ satisfies
\begin{equation}\label{eq:ch-fun}
|\phi(t)| \leq (1+t^2/m)^{-m\gamma/2}, \qquad t \in \R.
\end{equation}
Moreover, if $m\gamma > 1$,
\begin{equation}\label{eq:density}
M\left(\sqrt{a_1}X_1 + \dots + \sqrt{a_n}X_n\right) \leq \frac{\sqrt{m}\Gamma\left(\frac{m\gamma - 1}{2}\right)}{2\sqrt{\pi}\Gamma\left(\frac{m\gamma}{2}\right)}.
\end{equation}
\end{lemma}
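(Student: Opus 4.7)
The plan is to exploit independence together with the characteristic function identity $\E e^{itX_j} = (1-it)^{-\gamma}$ (the analytic continuation of \eqref{eq:mgf}) to write
\[
|\phi(t)| = \prod_{j=1}^n |1-it\sqrt{a_j}|^{-\gamma} = \prod_{j=1}^n (1+t^2 a_j)^{-\gamma/2}.
\]
Setting $s = t^2 \geq 0$ and taking logarithms, the bound \eqref{eq:ch-fun} is then equivalent to
\[
\sum_{j=1}^n \log(1+sa_j) \geq m\log(1+s/m).
\]

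The key step is to establish this last inequality by a majorisation argument, which is the only non-trivial input of the proof. For each fixed $s \geq 0$ the map $x \mapsto \log(1+sx)$ is concave, so the symmetric function $a \mapsto \sum_j \log(1+sa_j)$ is Schur-concave on $\R_+^n$. Under the standing hypotheses $\sum_j a_j = 1$ and $a_1 \leq 1/m$ (so $a_j \leq 1/m$ for every $j$ by the decreasing ordering), the partial sums of $(a_1,\dots,a_n)$ satisfy $a_1+\dots+a_k \leq k/m$ for $k \leq m$ and equal $1$ for $k\geq m$, which says precisely that $(a_1,\dots,a_n)$ is majorised by the vector $(\underbrace{1/m,\dots,1/m}_{m},0,\dots,0)$. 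Applying Schur-concavity then yields the desired inequality and hence \eqref{eq:ch-fun}. This is the direct analogue of the corresponding step in \cite{BNU}, extended from $\gamma = 1/2$ to arbitrary $\gamma$.

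For the density bound \eqref{eq:density}, observe that when $m\gamma > 1$ the majorant $(1+t^2/m)^{-m\gamma/2}$ is integrable on $\R$, so \eqref{eq:ch-fun} makes $\phi$ integrable and the Fourier inversion formula ensures that the density $f$ is continuous with
\[
M\left(\sqrt{a_1}X_1+\dots+\sqrt{a_n}X_n\right) = \|f\|_\infty \leq \frac{1}{2\pi}\int_\R |\phi(t)|\, \dd t \leq \frac{1}{2\pi}\int_\R (1+t^2/m)^{-m\gamma/2}\, \dd t.
\]
The substitution $t = \sqrt{m}\,u$ together with the standard Beta-function identity $\int_\R(1+u^2)^{-\alpha}\, \dd u = \sqrt{\pi}\,\Gamma(\alpha-\tfrac{1}{2})/\Gamma(\alpha)$ (valid for $\alpha > 1/2$, matching the requirement $m\gamma > 1$) applied with $\alpha = m\gamma/2$ then produces the claimed constant, completing the proof.
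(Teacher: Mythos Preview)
Your proof is correct and follows essentially the same approach as the paper: both reduce \eqref{eq:ch-fun} to the inequality $\sum_j \log(1+a_j t^2) \geq m\log(1+t^2/m)$ via the explicit form of the characteristic function, establish it by the majorisation $(a_1,\dots,a_n) \prec (1/m,\dots,1/m,0,\dots,0)$ combined with Schur-concavity (equivalently, Schur-convexity of the negative), and then derive \eqref{eq:density} by Fourier inversion and the Beta integral. The only cosmetic difference is that the paper also mentions the alternative of examining extreme points of the constraint set, as in \cite{BNU}.
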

\begin{proof}
The characteristic function of $X_1$ is
\[
\E e^{itX_1} = (1-it)^{-\gamma}, \qquad t \in \R,
\]
with choosing, say the principal branch. Thus,
\[
\phi(t) = \prod_{j=1}^n (1-i\sqrt{a_j}t)^{-\gamma}
\]
and
\[
\log|\phi(t)| = -\frac{\gamma}{2}\sum_{j=1}^n \log(1+a_jt^2).
\]
To finish the proof of \eqref{eq:ch-fun}, we find the maximum of the convex function
\[
(a_1,\dots, a_n) \mapsto -\sum_{j=1}^n \log (1+a_jt^2)
\]
over the domain $D = \{(a_1, \dots, a_n), \ a_1, \dots, a_n \geq 0, \ a_1+\dots  +a_n = 1\} \cap [0,1/m]^n$. We can either follow \cite{BNU} verbatim and examine its extreme points, or, alternatively, it is clear that an arbitrary vector in $D$ is majorised by the vector $(\frac{1}{m},\dots, \frac{1}{m},0,\dots,0)$ (with $\frac{1}{m}$ repeated $m$-times and $0$ repeated $n-m$ times), so the lemma follows from the Schur-convexity of this function.

To see \eqref{eq:density}, we apply the Fourier inversion formula and \eqref{eq:ch-fun},
\begin{align*}
M\left(\sqrt{a_1}X_1 + \dots + \sqrt{a_n}X_n\right) \leq  \frac{1}{2\pi}\int_{\R} |\phi(t)|\dd t &\leq  \frac{1}{2\pi}\int_{\R} (1+t^2/m)^{-m\gamma/2}  \dd t \\
&= \frac{\sqrt{m}\Gamma\left(\frac{m\gamma - 1}{2}\right)}{2\sqrt{\pi}\Gamma\left(\frac{m\gamma}{2}\right)}.
\end{align*}
\end{proof}

We will also need a simple point-wise bound on the density of the sum $\sqrt{a_1}X_1+\dots+\sqrt{a_n}X_n$.

\begin{lemma}\label{lm:density}
The density $p$ of $\sqrt{a_1}X_1+\dots+\sqrt{a_n}X_n$ satisfies
\[
\frac{1}{\Gamma(n\gamma)}(a_1\dots a_n)^{-\gamma/2}x^{n\gamma - 1}e^{-x/\sqrt{a_n}} \leq p(x) \leq \frac{1}{\Gamma(n\gamma)}(a_1\dots a_n)^{-\gamma/2}x^{n\gamma - 1}.
\]
\end{lemma}
\begin{proof}
Fix $x > 0$. By independence, convolving the densities of $\sqrt{a_j}X_j$ yields
\begin{equation}\label{eq:pp}
\begin{split}
p(x) = &\Gamma(\gamma)^{-n}(a_1\dots a_n)^{-\gamma/2} \\
&\int_{\substack{t_1, \dots, t_{n-1} > 0, \\ t_1 + \dots + t_{n-1} < x}} \Bigg[ (t_1\dots t_{n-1})^{\gamma-1}(x-t_1-\dots-t_{n-1})^{\gamma-1} \\
&\hspace{2em}  \cdot \exp\left\{-\frac{t_1}{\sqrt{a_1}}-\dots-\frac{t_{n-1}}{\sqrt{a_{n-1}}} - \frac{x-t_1-\dots-t_{n-1}}{\sqrt{a_n}}\right\}\Bigg] \dd t_1\dots \dd t_{n-1}.
\end{split}
\end{equation}
Changing each $t_j$ to $xt_j$ gives
\begin{align*}
&p(x) = \Gamma(\gamma)^{-n}(a_1\dots a_n)^{-\gamma/2}x^{n\gamma-1} \\\notag
&\ \cdot\int_{\substack{t_1, \dots, t_{n-1} > 0, \\ t_1 + \dots + t_{n-1} < 1}} \Bigg[ (t_1\dots t_{n-1})^{\gamma-1}(1-t_1-\dots-t_{n-1})^{\gamma-1} \\\notag
&\hspace{1.5em}  \cdot \exp\left\{-x\left(\frac{t_1}{\sqrt{a_1}} + \dots + \frac{t_{n-1}}{\sqrt{a_{n-1}}} + \frac{1-t_1 - \dots - t_{n-1}}{\sqrt{a_n}}\right) \right\}\Bigg] \dd t_1\dots \dd t_{n-1}.
\end{align*}
Note that for the $t_j$ from the integral's domain,
\begin{align*}
0 &\leq \frac{t_1}{\sqrt{a_1}} + \dots + \frac{t_{n-1}}{\sqrt{a_{n-1}}} + \frac{1-t_1 - \dots - t_{n-1}}{\sqrt{a_n}} \\
&=\frac{1}{\sqrt{a_n}} + \sum_{j=1}^{n-1} t_j\left(\frac{1}{\sqrt{a_j}}-\frac{1}{\sqrt{a_n}}\right) \leq \frac{1}{\sqrt{a_n}}
\end{align*}
(recalling that $a_j \geq a_n$). The resulting estimates on $\exp\left\{\dots\right\}$ in the integrand give the desired bounds on $p$, where the factor $\frac{1}{\Gamma(n\gamma)}$ comes from
\begin{align*}
&\Gamma(\gamma)^{-n}\int_{\substack{t_1, \dots, t_{n-1} > 0, \\ t_1 + \dots + t_{n-1} < 1}}(t_1\dots t_{n-1})^{\gamma-1}(1-t_1-\dots-t_{n-1})^{\gamma-1} \dd t_1\dots \dd t_{n-1} \\
&= \Gamma(\gamma)^{-n}B(\gamma,\gamma)B(2\gamma,\gamma)\dots B((n-1)\gamma,\gamma) = \frac{1}{\Gamma(n\gamma)}.
\end{align*}
\end{proof}


We move to the proof of Theorem \ref{thm:M}. First we assume that $\gamma \geq 1$.

\begin{proof}[Proof of \eqref{eq:g1}, the lower bound.]
It immediately follows from Lemma \ref{lm:Moriguti} since we have, $\Var(\sum \sqrt{a_j}X_j) = \gamma$.
\end{proof}

\begin{proof}[Proof of \eqref{eq:g1}, the upper bound.]
{\red It immediately follows from Lemma \ref{lm:BCh} since for $\gamma \geq 1$, $\Gamma(\gamma)$ random variables are log-concave and sums of independent log-concave random variables are log-concave.}
\end{proof}

Now we assume that $\gamma < 1$. The upper bound in \eqref{eq:g12} as well as Remark \ref{rem:M} follow from the following upper bound.

\begin{theorem}\label{thm:upp-bd}
Fix a positive integer $k$ and let $\frac{1}{k+1} \leq \gamma < \frac{1}{k}$. Then
\begin{equation}\label{eq:gk}
M\left(\sum_{j=1}^n \sqrt{a_j}X_j\right) \leq C_\gamma(a_1\dots a_k)^{-\gamma/2}(1-a_1-\dots-a_k)^{\frac{k\gamma-1}{2}}
\end{equation}
with the right hand side understood as $+\infty$ when $n \leq k$. Constant $C_\gamma$ depends only on $\gamma$.
\end{theorem}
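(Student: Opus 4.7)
The plan is to bound $M(S)$ for $S = \sum_{j=1}^n \sqrt{a_j}X_j$ via the Fourier inversion formula, in the spirit of Lemma \ref{lm:ch-fun}. First, one may assume $n \geq k+1$, since otherwise $1 - a_1 - \cdots - a_k \leq 0$ and the right-hand side of \eqref{eq:gk} is declared to be $+\infty$. Set $\sigma^2 = 1 - a_1 - \cdots - a_k > 0$ and let $\phi$ be the characteristic function of $S$. Fourier inversion together with the characteristic function of a $\Gamma(\gamma)$ variable gives
\[
M(S) \leq \frac{1}{2\pi}\int_{\R}|\phi(t)|\,\dd t, \qquad |\phi(t)| = \prod_{j=1}^n(1+a_jt^2)^{-\gamma/2}.
\]

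The main manoeuvre is a two-part split of this product. For the first $k$ factors, use the crude estimate $1 + a_j t^2 \geq a_j t^2$ to obtain
\[
\prod_{j=1}^k(1+a_jt^2)^{-\gamma/2} \leq (a_1\cdots a_k)^{-\gamma/2}|t|^{-k\gamma}.
\]
For the remaining $n-k$ factors, the elementary expansion
\[
\prod_{j=k+1}^n (1+a_jt^2) \;\geq\; 1 + \Big(\sum_{j > k}a_j\Big)t^2 \;=\; 1+\sigma^2 t^2
\]
(discard the nonnegative higher-order cross-terms) yields $\prod_{j>k}(1+a_jt^2)^{-\gamma/2} \leq (1+\sigma^2t^2)^{-\gamma/2}$. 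Multiplying and making the change of variable $u = \sigma t$,
\[
\int_{\R}|\phi(t)|\,\dd t \;\leq\; (a_1\cdots a_k)^{-\gamma/2}\,\sigma^{k\gamma - 1}\int_{\R}|u|^{-k\gamma}(1+u^2)^{-\gamma/2}\,\dd u,
\]
which is \eqref{eq:gk} with $C_\gamma$ equal to $\tfrac{1}{2\pi}$ times the last, universal integral.

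The main technical obstacle is to confirm that $I_\gamma := \int_{\R}|u|^{-k\gamma}(1+u^2)^{-\gamma/2}\,\dd u$ is finite. Near the origin, the integrand behaves like $|u|^{-k\gamma}$ and is integrable because the hypothesis $\gamma < 1/k$ gives $k\gamma < 1$. At infinity it behaves like $|u|^{-(k+1)\gamma}$ and is integrable precisely when $(k+1)\gamma > 1$, which holds strictly throughout the interior $\gamma \in (1/(k+1), 1/k)$; only the endpoint $\gamma = 1/(k+1)$ produces a logarithmic divergence at infinity, revealing that the Step~2 bound $\prod_{j>k}(1+a_jt^2) \geq 1 + \sigma^2 t^2$ is too crude there. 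To repair the argument at this boundary one can peel off one more factor via
\[
\prod_{j=k+1}^n (1+a_jt^2) \;\geq\; (1+a_{k+1}t^2)\bigl(1+(\sigma^2 - a_{k+1})t^2\bigr), \qquad n \geq k+2,
\]
together with a short dichotomy on whether $a_{k+1}$ is larger or smaller than $\sigma^2/2$ (both cases yielding an additional $|t|^{-\gamma}$ decay at infinity), while the residual case $n = k+1$ at $\gamma = 1/(k+1)$ is handled directly by Lemma \ref{lm:density}, since $(k+1)\gamma = 1$ forces the factor $x^{(k+1)\gamma - 1}$ there to be identically one and the pointwise upper bound on $p$ becomes a constant of the required form.
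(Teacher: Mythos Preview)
Your Fourier argument for the open range $\gamma\in(\tfrac{1}{k+1},\tfrac{1}{k})$ is correct and is a genuinely different route from the paper's. The paper never integrates $|\phi|$ with the split $\prod_{j\le k}\cdot\prod_{j>k}$; instead it runs a four-way case analysis: $n=k+1$ is done by a direct pointwise estimate on the convolution, while for $n\ge k+2$ one distinguishes $a_1\le\tfrac{1}{k+2}$ (Lemma~\ref{lm:ch-fun} with $m=k+2$ gives a bounded density outright) from $a_1>\tfrac{1}{k+2}$, and in the latter case further splits on $a_{k+1}/(1-\alpha)$, either convolving a pointwise bound on the heavy part with a bounded-density light part, or dropping summands to reduce to $n=k+1$. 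Your single inequality $\prod_{j>k}(1+a_jt^2)\ge 1+\sigma^2t^2$ collapses all of this into one line, at the price of the endpoint. The paper's approach, by contrast, handles the full closed--open range $[\tfrac{1}{k+1},\tfrac{1}{k})$ uniformly.

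At the endpoint $\gamma=\tfrac{1}{k+1}$ your repair has a gap. After peeling one more factor and rescaling you face
\[
\int_{\R}|u|^{-k\gamma}(1+\lambda u^2)^{-\gamma/2}\bigl(1+(1-\lambda)u^2\bigr)^{-\gamma/2}\dd u,\qquad \lambda=\frac{a_{k+1}}{\sigma^2}\in(0,1),
\]
and the dichotomy $\lambda\gtrless\tfrac12$ only ensures that \emph{one} of the two bracketed factors is comparable to $(1+u^2)^{-\gamma/2}$; the other can be essentially $1$ on an arbitrarily long interval. Concretely, when $\lambda\to 1$ the integrand is $\asymp|u|^{-(k+1)\gamma}=|u|^{-1}$ on $1\le|u|\lesssim(1-\lambda)^{-1/2}$, so the integral blows up like $\log\tfrac{1}{1-\lambda}$ and is not bounded uniformly in $\lambda$. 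A correct fix is close by but not the one you sketched: if $a_{k+1}\le\sigma^2/2$ then every $a_j/\sigma^2$ with $j>k$ is at most $\tfrac12$, and the Schur-convexity argument behind Lemma~\ref{lm:ch-fun} upgrades your tail bound to $\prod_{j>k}(1+a_jt^2)^{-\gamma/2}\le(1+\sigma^2t^2/2)^{-\gamma}$ (exponent $\gamma$, not $\gamma/2$), which \emph{is} integrable against $|t|^{-k\gamma}$; if instead $a_{k+1}>\sigma^2/2$, drop the summands $j\ge k+2$ and invoke your own $n=k+1$ endpoint bound together with $a_{k+1}^{-\gamma/2}\le 2^{\gamma/2}\sigma^{-\gamma}$. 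This last manoeuvre is exactly the paper's final subcase.
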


\begin{remark}\label{rem:smalln} 
In particular, when $k=1$, this gives the upper bound in \eqref{eq:g12}. Unfortunately, when $\gamma < \frac{1}{2}$, that is $k \geq 2$, bound \eqref{eq:gk} is not optimal: consider for instance the case when $a_1 = \dots = a_n = \frac{1}{n}$ with large $n$. However, note that when $n = k+1$, bound \eqref{eq:gk} is matched from below by Lemma \ref{lm:density} which gives that in this case
\[
M\left(\sum_{j=1}^{k+1} \sqrt{a_j}X_j\right) \geq c_\gamma(a_1\dots a_k)^{-\gamma/2}a_{k+1}^{\frac{k\gamma-1}{2}}
\]
with $c_\gamma = \frac{((k+1)\gamma-1)^{(k+1)\gamma-1}}{e^{(k+1)\gamma-1}\Gamma((k+1)\gamma)}$, justifying Remark \ref{rem:M}.
\end{remark}

\begin{proof}[Proof of \eqref{eq:gk}.]
In the course of the proof the value of $C_\gamma$ may change from line to line. Note that when $n \leq k$, by Lemma \ref{lm:density}, the maximum of the density is $+\infty$ (because the exponent at $x$ is negative). Thus, we can assume that $n \geq k+1$.

\emph{Case $n = k+1$.}
From \eqref{eq:pp}, after changing the variables (scaling each $t_i$ by $\sqrt{a_{k+1}}x$) we have,
\begin{align*}
&p(\sqrt{a_{k+1}}x) =  \Gamma(\gamma)^{-k-1}(a_1\dots a_{k})^{-\gamma/2}a_{k+1}^{\frac{k\gamma-1}{2}}x^{(k+1)\gamma-1} \\\notag
&\ \cdot\int_{\substack{t_1, \dots, t_{k} > 0, \\ t_1 + \dots + t_{k} < 1}} \Bigg[ (t_1\dots t_{k})^{\gamma-1}(1-t_1-\dots-t_{k})^{\gamma-1} \\\notag
&\hspace{1.5em}  \cdot \exp\left\{-x\left(\sqrt{\frac{a_{k+1}}{a_1}}t_1 + \dots + \sqrt{\frac{a_{k+1}}{a_{k}}}t_k +1-t_1 - \dots - t_{k}\right) \right\}\Bigg] \dd t_1\dots \dd t_{k}.
\end{align*}
The crude bound $\sum \sqrt{\frac{a_{k+1}}{a_j}}t_j \geq 0$ yields
\begin{align*}
&p(\sqrt{a_{k+1}}x) \leq  \Gamma(\gamma)^{-k-1}(a_1\dots a_{k})^{-\gamma/2}a_{k+1}^{\frac{k\gamma-1}{2}}x^{(k+1)\gamma-1} \\
&\ \cdot\int_{\substack{t_1, \dots, t_{k} > 0, \\ \sum t_j < 1}} \Bigg[ (t_1\dots t_{k})^{\gamma-1}\left(1-\sum t_j\right)^{\gamma-1}  \cdot \exp\left\{-x\left(1-\sum t_j\right) \right\}\Bigg] \dd t_1\dots \dd t_{k}.
\end{align*}
Using 
\[
x^{(k+1)\gamma-1}\exp\left\{-x\left(1-\sum t_j\right) \right\} \leq L_\gamma\left(1-\sum t_j\right)^{1-(k+1)\gamma},
\]
where $L_\gamma = \sup_{x > 0} x^{(k+1)\gamma-1}e^{-x} = ((k+1)\gamma-1)^{(k+1)\gamma-1}e^{-((k+1)\gamma-1)}$, we obtain the desired bound
\begin{equation}\label{eq:k+1}
\|p\|_\infty \leq C_\gamma(a_1\dots a_{k})^{-\gamma/2}a_{k+1}^{\frac{k\gamma-1}{2}}
\end{equation}
with
\[
C_\gamma = \Gamma(\gamma)^{-k-1}L_\gamma\int_{\substack{t_1, \dots, t_{k} > 0, \\ \sum t_j < 1}}  (t_1\dots t_{k})^{\gamma-1}\left(1-\sum t_j\right)^{-k\gamma} \dd t_1\dots \dd t_{k}
\]
which is finite because $k\gamma < 1$.

\emph{Case $n \geq k+2$.} If $a_1 \leq \frac{1}{k+2}$, then \eqref{eq:density} applied to $m = k+2$ gives
\[
M\left(\sum_{j=1}^n \sqrt{a_j}X_j\right) \leq C_\gamma \leq C_\gamma(a_1\dots a_k)^{-\gamma/2}\left(1-a_1-\dots-a_k\right)^{\frac{k\gamma-1}{2}},
\]
since $a_1\dots a_k \leq 1$, $1-a_1-\dots-a_k \leq 1$, where $C_\gamma$ only depends on $\gamma$. Now we assume that $a_1 > \frac{1}{k+2}$, write
\[
\sum_{j=1}^n \sqrt{a_j}X_j = \sqrt{\alpha}\eta + \sqrt{1-\alpha}\xi
\]
with
\[
\alpha = \sum_{j=1}^k a_j
\]
and
\[
\eta = \sum_{j=1}^k \sqrt{\frac{a_j}{\alpha}}X_j, \qquad \xi = \sum_{j=k+1}^n \sqrt{\frac{a_j}{1-\alpha}}X_j.
\]
We break the argument into two further cases depending on whether we can guarantee that $\xi$ has a bounded density (using Lemma \ref{lm:ch-fun}).

\emph{Case $\frac{a_{k+1}}{1-\alpha} \leq \frac{1}{k+2}$.} Here necessarily the number of summands in $\xi$ is at least $k+2$ (by comparing the largest coefficient to the average). Let $g$ be the density of $\xi$. By \eqref{eq:density} applied with $m=k+2$, we get $\|g\|_\infty \leq C_\gamma$. Moreover, if we let $f$ be the density of $\eta$, we know by Lemma \ref{lm:density} that
\[
f(x) \leq \frac{1}{\Gamma(k\gamma)}\alpha^{k\gamma/2}(a_1\dots a_k)^{-\gamma/2}x^{k\gamma-1}, \qquad x > 0.
\]
Thus for the density $p$ of $\sum_{j=1}^n \sqrt{a_j}X_j$, we obtain
\begin{align*}
p(x) &= \int_{0}^{x} \frac{1}{\sqrt{\alpha(1-\alpha)}}
f\left(\frac{t}{\sqrt{\alpha}}\right)g\left(\frac{x-t}{\sqrt{1-\alpha}}\right) \dd t \\
&\leq \frac{1}{\Gamma(k\gamma)\sqrt{1-\alpha}}(a_1\dots a_k)^{-\gamma/2}\int_0^x t^{k\gamma-1}g\left(\frac{x-t}{\sqrt{1-\alpha}}\right) \dd t.
\end{align*}
Changing $x$ to $\sqrt{1-\alpha}x$ and $t$ to $\sqrt{1-\alpha}t$, we get
\[
p(\sqrt{1-\alpha}x) \leq \frac{1}{\Gamma(k\gamma)}(a_1\dots a_k)^{-\gamma/2}(1-\alpha)^{\frac{k\gamma-1}{2}}\int_0^x t^{k\gamma-1}g\left(x-t\right) \dd t.
\]
It remains to observe that the resulting integral is bounded,
\begin{align*}
\int_0^x t^{k\gamma-1}g\left(x-t\right) \dd t \leq \|g\|_\infty\int_{t<1} t^{k\gamma-1}\dd t + \int_{t > 1} g(x-t)\dd t \leq \frac{\|g\|_\infty}{k\gamma} + 1.
\end{align*}

\emph{Case $\frac{a_{k+1}}{1-\alpha} \geq \frac{1}{k+2}$.} Plainly,
\[
M\left(\sum_{j=1}^n \sqrt{a_j}X_j\right) \leq M\left(\sum_{j=1}^{k+1} \sqrt{a_j}X_j\right) = \frac{1}{\sqrt{A}}M\left(\sum_{j=1}^{k+1} \sqrt{\frac{a_j}{A}}X_j\right)
\]
with $A = \sum_{j=1}^{k+1} a_j$. By the case $n=k+1$, i.e. \eqref{eq:k+1},
\[
M\left(\sum_{j=1}^{k+1} \sqrt{\frac{a_j}{A}}X_j\right) \leq C_\gamma(a_1\dots a_k)^{-\gamma/2}A^{k\gamma/2}(a_{k+1}/A)^{\frac{k\gamma-1}{2}},
\]
thus
\begin{align*}
M\left(\sum_{j=1}^n \sqrt{a_j}X_j\right) &\leq C_\gamma(a_1\dots a_k)^{-\gamma/2}a_{k+1}^{\frac{k\gamma-1}{2}} \\
&\leq C_\gamma(k+2)^{-\frac{k\gamma-1}{2}}(a_1\dots a_k)^{-\gamma/2}(1-\alpha)^{\frac{k\gamma-1}{2}},
\end{align*}
as desired. This concludes the proof of \eqref{eq:gk}.
\end{proof}

\begin{proof}[Proof of \eqref{eq:g12}, the lower bound]
We assume that $0 < \gamma < 1$ and denote $Z = \sum \sqrt{a_j}X_j$. The argument from \cite{BNU} can be repeated almost verbatim. We include it for completeness.  

\emph{Case $a_1 \leq \frac12$}. Since $\Var(Z) =\gamma$, Lemma \ref{lm:Moriguti} yields $M(Z)\gr \frac{1}{2\sqrt{3\gamma}}$ so if $a_1\ls 1/2$ then $M(Z)\gr c_\gamma(1-a_1)^{\frac{\gamma-1}{2}}$ with $c_\gamma=(2^{\frac{3-\gamma}{2}}\sqrt{3\gamma})^{-1}$.

\emph{Case $a_1\gr 1/2$}. Let $\xi=\sum_{j=2}^n \frac{\sqrt{a_j}}{\sqrt{1-a_1}}X_j$, so that $Z=\sqrt{a_1}X_1+\sqrt{1-a_1}\xi$. Note that $\xi$ is independent of $\sqrt{a_1}X_1$ so the density $f_Z$ of $Z$ is given by the convolution of the densities $f_{\sqrt{a_1}X_1}$ and $f_{\sqrt{1-a_1}\xi}$. We have,
\[
f_Z(x) = \frac{1}{\Gamma(\gamma)\sqrt{a_1(1-a_1)}}\int_0^{x} (\frac{x-t}{\sqrt{a_1}})^{\gamma-1}\exp\left(-\frac{x-t}{\sqrt{a_1}}\right)f_\xi\left(\frac{t}{\sqrt{1-a_1}}\right)\,dt,
\] 
and, applying this for $x\sqrt{1-a_1}$,
\[
f_Z(x\sqrt{1-a_1}) = \frac{(1-a_1)^\frac{\gamma-1}{2}}{\Gamma(\gamma)a_1^{\gamma/2}}\int_0^x (x-t)^{\gamma-1}\exp\left(-\frac{\sqrt{1-a_1}}{\sqrt{a_1}}(x-t)\right)f_\xi(t)\,dt.
\] 
We will use this identity for $x=\E\xi+2$, lower bounding the expression in the right hand side by integrating on the interval $I=(\max(\E\xi-2,0),\E\xi+2)$. Note that $x-t\ls 4$ for every $t\in I$. It follows that
\[
M(Z) \gr \frac{(1-a_1)^\frac{\gamma-1}{2}}{\Gamma(\gamma)a_1^{\gamma/2}} 4^{\gamma-1}\exp\left(-\frac{4\sqrt{1-a_1}}{\sqrt{a_1}}\right)\cdot \pp(\xi\in I).
\]
The assumption $a_1\gr 1/2$ yields $\frac{1-a_1}{a_1}\ls 1$. Since $\Var(\xi)=\gamma$, we get by Chebyshev's inequality that
\[
\pp(\xi\in I) = 1-\pp(|\xi-\E\xi|\gr 2) \gr 1-\frac{1}{4}\Var(\xi) = 1-\frac{\gamma}{4}.
\]
Putting these together and the trivial bound $a_1^{\gamma/2} \leq 1$, we get the lower bound $M(Z) \gr c_\gamma(1-a_1)^\frac{\gamma-1}{2}$ with $c_\gamma=\frac{4^{\gamma}(4-\gamma)}{4^2e^4\Gamma(\gamma)}$.

Combining the two cases together, the lower bound in \eqref{eq:g12} holds with
\[
c_\gamma = \min\left\{(2^{\frac{3-\gamma}{2}}\sqrt{3\gamma})^{-1}, \frac{4^{\gamma}(4-\gamma)}{4^2e^4\Gamma(\gamma)} \right\} \geq \min\left\{\frac{1}{2^{3/2}\sqrt{3}}, \frac{3\gamma}{4^2e^4}\right\}  > 0.003\gamma,
\]
since $\frac{1}{\Gamma(\gamma)} \geq \gamma$.
\end{proof}


\end{document}